\providecommand\@dotsep{5}
\def\listtodoname{List of Todos}
\def\listoftodos{\@starttoc{tdo}\listtodoname}
\numberwithin{equation}{section}
\newcommand{\Om} {\Omega}
\newcommand{\la} {\lambda}
\newtheorem{Theorem}{Theorem}[section]
\newtheorem{Lemma}[Theorem]{Lemma}
\newtheorem{Remark}[Theorem]{Remark}
\newtheorem{Definition}[Theorem]{Definition}
\newcommand\R{\mathbb R}
\newcommand\N{\mathbb N}
\begin{document}

\title[Weighted singular problem]
{On the regularity and existence of weak solutions for a class of degenerate singular elliptic problem}

\author{Prashanta Garain}

\address[Prashanta Garain ]
{\newline\indent Department of Mathematical Sciences
	\newline\indent
Indian Institute of Science Education and Research Berhampur,
	\newline\indent
Berhampur, Odisha 760010, India,
\newline\indent
Email: {\tt pgarain92@gmail.com} }

\pretolerance10000

\subjclass[2020]{35J75, 35J92, 35J70, 35D30.}
\keywords{Singular elliptic problem, variable exponent, $p$-admissible weights, existence, weighted $p$-Laplace equation.}

\begin{abstract}
In this article, we consider a class of degenerate singular problems. The degeneracy is captured by the presence of a class of $p$-admissible weights, which may vanish or blow up near the origin. Further, the singularity is allowed to vary inside the domain. We provide sufficient conditions on the weight function, on the singular exponent and the source function to establish regularity and existence results.
\end{abstract}

\maketitle

\section{Introduction}
In this article, we consider the following class of degenerate singular elliptic problem
\begin{equation}\label{maineqn}
-\text{div}(A(x,\nabla u))=f(x)u^{-\gamma(x)}\text{ in }\Om,\quad u>0\text{ in }\Om,\quad u=0\text{ on }\partial\Om,
\end{equation}
where $\Om\subset\mathbb{R}^N$ is a bounded smooth domain with $N\geq 2$, $1<p<\infty$, $\gamma\in C(\overline{\Omega})$ is a positive function and $f\in L^m(\Omega)\setminus\{0\}$ is nonnegative, for some $m\geq 1$ to be made precise later on. Here $A:=A(x,\xi):\Om\times\mathbb{R}^N\to\mathbb{R}^N$ is a function, which is measurable in $x$ for every $\xi\in\mathbb{R}^N$ and continuous in $\xi$ for almost every $x\in \Om$. Further, for almost every $x\in\Om$ and every $\xi\in\mathbb{R}^N$, the function $A$ satisfy the following four hypothesis: 
\begin{enumerate}
\item[$(H_1)$] $|A(x,\xi)|\leq\beta w(x)|\xi|^{p-1}$ for some constant $\beta>0$,
\item[$(H_2)$] $A(x,\xi)\xi\geq\alpha w(x)|\xi|^p$ for some constant $\alpha>0$,
\item[$(H_3)$] $\langle A(x,\xi_1)-A(x,\xi_2),\xi_1-\xi_2\rangle>0$ for every $\xi_1,\xi_2\in\mathbb{R}^N$, $\xi_1\neq\xi_2$ and
\item[$(H_4)$] $A(x,\la\xi)=\la|\la|^{p-2}A(x,\xi)$ whenever $\la\in\mathbb{R}\setminus\{0\}$.
\end{enumerate}

We will assume that the weight function $w$ belong to a class of $p$-admissible weights, which may vanish or blow up near the origin (for example, $w(x)=|x|^\nu$,\,$\nu\in\mathbb{R}$) to be discussed below. The degeneracy is captured by such behavior of the weights.

We observe that equation \eqref{maineqn} extends the following weighted anisotropic singular problem
\begin{equation}\label{waeqn}
-\text{div}(w(x)H(\nabla u)^{p-1}\nabla H(\nabla u))=f(x)u^{-\gamma(x)}\text{ in }\Om,\quad u>0\text{ in }\Om,\quad u=0\text{ on }\partial\Om,
\end{equation}
which can be formed by setting
\begin{equation}\label{ex}
A(x,\xi)=\frac{1}{p}w(x)\nabla F(\xi),
\end{equation} 
in \eqref{maineqn}, see \cite{Juh}. Here $1<p<\infty$, $F=H^p$ is strictly convex, for a given Finsler-Minkowski norm $H:\mathbb{R}^N\to[0,\infty)$ i.e. $H:\mathbb{R}^N\to[0,\infty)$ is $C^1(\mathbb{R}^N\setminus\{0\})$ and strictly convex such that $H(\xi)=0$ iff $\xi=0$, $H(t\xi)=|t|H(\xi)$ for every $\xi\in\R^N$, $t\in\R$ and $c_1|\xi|\leq H(\xi)\leq c_2|\xi|$ for some positive constants $c_1,c_2$ for all $\xi\in\mathbb{R}^N$. We refer the reader to \cite{Xiathesis, BFK, BCS} and the references therein for more details on $H$.
When $H(\xi)=|\xi|$, equation \eqref{waeqn} reduces to the following weighted singular $p$-Laplace equation
\begin{equation}\label{pmaineqn}
-\Delta_{p,w}u=f(x)u^{-\gamma(x)}\text{ in }\Om,\quad u>0\text{ in }\Om,\quad u=0\text{ on }\partial\Om,
\end{equation}
where 
$$
\Delta_{p,w}u:=\text{div}(w(x)|\nabla u|^{p-2}\nabla u),\quad 1<p<\infty
$$
is the weighted $p$-Laplace operator. We observe that $\Delta_{p,w}$ reduces to the $p$-Laplace operator $\Delta_p$ when $w\equiv 1$. Therefore, equation \eqref{maineqn} covers a wide range of singular problems.

Here singularity refers to the blow up property of the nonlinearity in the right hand side of \eqref{maineqn}, which occur due to the exponent $\gamma$. Elliptic equations with singular nonlinearities has been studied thoroughly over the last three decade in both the linear and nonlinear setting and there is a colossal amount of literature available in this direction, but most of them are restricted to bounded weight functions $w$ and with constant singular exponent $\gamma$. In this article, we consider the case when the weight function $w$ may vanish or blow up near the origin (for example, $w(x)=|x|^{\nu}$, $\nu\in\mathbb{R}$) and simultaneously $\gamma$ is allowed to vary inside the domain $\Omega$.

Let us discuss some known results for singular problems with bounded weight functions $w$ and constant singular exponent $\gamma$. For $p=2$, authors in \cite{CRT} established existence of a unique classical solution for the singular Laplace equation
\begin{equation}\label{lap}
-\Delta u=u^{-\gamma}\text{ in }\Om,\quad u>0\text{ in }\Om,\quad u=0\text{ on }\partial\Om,
\end{equation}
for any $\gamma>0$. Later it has been observed in \cite{LMckena} such a solution belong to $W_0^{1,2}(\Om)$ if and only if $\gamma<3$. Authors in \cite{Boc} removed this restriction of $\gamma$ to find weak solutions in $W^{1,2}_{\mathrm{loc}}(\Om)$ of the problem \eqref{lap} for any $\gamma>0$ by the approximation method. The associated singular $p$-Laplace equation of \eqref{lap} is studied in \cite{Canino,DeCave} and the references therein. In the perturbed case, for a certain range of $\la$ and $q$, the equation
\begin{equation}\label{purlap}
-\Delta u=\la u^{-\gamma}+u^{q}\text{ in }\Om,\quad u>0\text{ in }\Om,\quad u=0\text{ on }\partial\Om,
\end{equation}
possesses multiple weak solutions for any $0<\gamma<1$ as shown in \cite{arcoyaBoc,YHaitao,hirano1}, whereas for any $\gamma>0$, existence of one weak solution is proved in \cite{Boc1}, for multiplicity result in this concern, see \cite{Merida}. For multiplicity result of the $p$-Laplace analogue of \eqref{purlap}, see \cite{GST} for $0<\gamma<1$ and \cite{BGmed} for $\gamma\geq 1$. For study of singular measure data problems, we refer to \cite{Pettita} and the references therein.

When $\gamma$ is a variable, authors in \cite{CMP} provided sufficient conditions on $\gamma$ to establish existence of weak solutions for the problem \eqref{lap}. Such phenomenon has been extended by many authors to the variants of $p$-Laplace equations, see \cite{CGG,Zhang,Alves20,Mirivar, Alves18,BKcvpde,PSzamp,BGM, GMnonloc} and the references therein. We would like to point out that for singular problems, when $\gamma>1$, in general the solution $u$ does not belong to $W_{0}^{1,p}(\Om)$. This is compensated by a suitable power $\theta\geq 1$ so that $u^\theta\in W_0^{1,p}(\Om)$, which is referred to as the Dirichlet boundary condition $u=0$ on $\partial\Om$ (for example, see \cite{Boc, Canino}). 

It is worth mentioning that weighted singular problems are very less understood, when the weight function $w$ vanish or blow up near the origin even in the linear case $p=2$ in \eqref{pmaineqn}. In such situation, equation \eqref{pmaineqn} becomes degenerate that is captured by the weight function $w$, one can refer to \cite{Drabek,Juh,Gold0, Gold1} for a wide range of investigation of weighted problems with non-singular nonlinearities. Recently, authors in \cite{Garain, GM, BGaniso} studied the following type of weighted singular problems
\begin{equation}\label{bgmeqn}
-\Delta_{p,w}u=g(x,u)\text{ in }\Om,\quad u>0\text{ in }\Om,\quad u=0\text{ on }\partial\Om
\end{equation}
to deal with the question of existence for various type of singular nonlinearity $g$, where the weight $w$ belong to a class of Muckenhoupt weights. Recently, singular problems for more general $p$-admissible weights has been studied in \cite{GK, BG, Gtmna, Hara}.

In this article, we provide sufficient conditions on the weight function $w$, the variable singular exponent $\gamma$ and on the nonlinearity $f$ to ensure existence of weak solutions of \eqref{maineqn}. Further,  for a wider class of nonlinearity $f$, we establish existence and regularity results assuming the singular exponent $\gamma$ is a positive constant.

To prove our main results, we follow the approximation approach developed in \cite{Boc}, although we face several difficulties due to the weighted nonlinear structure. Some major difficulties in the weighted case are that suitable embedding results, regularity results are not readily available. We found a class of $p$-admissible weights $W_p^{s}$ defined in Section $2$ to be useful, for which the corresponding theory of the weighted Sobolev space is well-developed, see \cite{Juh, Drabek}. This class of weights allows us to shift from the weighted Sobolev space to a unweighted Sobolev space with a different Sobolev exponent (see Lemma \ref{emb}), which is useful to obtain several a priori estimates. Moreover, we observe that when $f=w$ our results hold for the whole class of $p$-addmissible weights $W_p$ to be defined in Section $2$. As in \cite{Hara},
we prove existence of the approximate solutions by a direct implementation of the Minty-Browder Theorem from \cite{var}. Moreover, due to the nonlinear structure of our operator, in contrast to \cite{Boc}, a priori estimates on the approximate solutions are not enough to pass to the
limit, see \cite{Canino}, which required a gradient convergence theorem in the unweighted setting, see \cite{Bocgrad, Masomura, TeroMaly}. Here we apply gradient convergence
theorem established for the weighted case in \cite{Mikko}, see also \cite{Tru}. Further, we use the technique from \cite{CMP} to deal with variable singularity.

This article is organized as follows: In Section $2$, we present the functional setting and state our main results. In Section $3$, we establish some preliminary results and finally, in Section $4$, we prove our main results.

\section{Functional setting and main results}
Throughout the rest of the article, we assume $1<p<\infty$, unless otherwise mentioned. We say that a function $w$ belong to the class of $p$-admissible weights $W_p$, if $w\in L^1_{\mathrm{loc}}(\mathbb{R}^N)$ such that $0<w<\infty$ almost everywhere in $\mathbb{R}^N$ and satisfy the following conditions:
\begin{enumerate}
\item[(i)] for any ball $B$ in $\mathbb{R}^N$, there exists a positive constant $C_{\mu}$ such that 
$$
\mu(2B)\leq C_{\mu}\,\mu(B),
$$
where 
$$
\mu(E)=\int_{E}w\,dx
$$
for a measurable subset $E$ in $\mathbb{R}^N$ and $d\mu(x)=w(x)\,dx$, where $dx$ is the $N$-dimensional Lebesgue measure.
\item[(ii)] If $D$ is an open set and $\{\phi_i\}_{i\in\mathbb{N}}\subset C^{\infty}(D)$ is a sequence of functions such that 
$$
\int_{D}|\phi_i|^p\,d\mu\to 0\text{ and } \int_{D}|\nabla\phi_i-v|^p\,d\mu\to 0
$$
as $i\to\infty$, where $v$ is a vector valued measurable function in $L^p(D,w)$, then $v=0$.
\item[(iii)] There exist constants $\kappa>1$ and $C_1>0$ such that 
\begin{equation}\label{wp}
\left(\frac{1}{\mu(B)}\int_{B}|\phi|^{\kappa p}\,d\mu\right)^\frac{1}{\kappa p}\leq C_1 r \left(\frac{1}{\mu(B)}\int_{B}|\nabla\phi|^p\,d\mu\right)^\frac{1}{p},
\end{equation}
whenever $B=B(x_0,r)$ is a ball in $\mathbb{R}^N$ centered at $x_0$ with radius $r$ and $\phi\in C_{c}^\infty(B)$.
\item[(iv)] There exists a constant $C_2>0$ such that
\begin{equation}\label{wp1}
\int_{B}|\phi-\phi_B|^p\,d\mu\leq C_2 r^p\int_{B}|\nabla\phi|^p\,d\mu,
\end{equation}
whenever $B=B(x_0,r)$ is a ball in $\mathbb{R}^N$ and $\phi\in C^\infty(B)$ is bounded. Here
$$
\phi_B=\frac{1}{\mu(B)}\int_{B}\phi\,d\mu.
$$
\end{enumerate}
The conditions (i)-(iv) are important in the theory of weighted Sobolev spaces, one can refer to \cite{Juh} for more details. \\
\textbf{Examples:} 
\begin{itemize}
\item Muckenhoupt weights $A_p$ are $p$-admissible, see \cite[Theorem 15.21]{Juh}.

\item Let $1<p<N$ and $J_f(x)$ denote the determinant of the Jacobian matrix of a $K$-quasiconformal mapping $f:\mathbb{R}^N\to\mathbb{R}^N$, then $w(x)=J_f(x)^{1-\frac{p}{N}}\in W_q$ for any $q\geq p$, see \cite[Corollary 15.34]{Juh}.

\item If $1<p<\infty$ and $\nu>-N$, then $w(x)=|x|^{\nu}\in W_p$, see \cite[Corollary 15.35]{Juh}.

\end{itemize}
For more examples, refer to \cite{ex1, ex2, ex3, Tero, Juh} and the references therein. 

\begin{Definition}(Weighted Spaces)
Let $1<p<\infty$ and $w\in W_p$. Then the weighted Lebesgue space $L^{p}(\Omega,w)$ is the class of measurable functions $u:\Om\to\mathbb{R}$ such that the norm of $u$ given by
\begin{equation}\label{lnorm}
\|u\|_{L^p(\Om,w)} = \Big(\int_{\Omega}|u(x)|^{p} w(x)\,dx\Big)^\frac{1}{p}<\infty.
\end{equation}
The weighted Sobolev space $W^{1,p}(\Om,w)$ is the class of measurable functions $u:\Om\to\mathbb{R}$ such that
\begin{equation}\label{norm1}
\|u\|_{1,p,w} = \Big(\int_{\Omega}|u(x)|^{p} w(x)\,dx+\int_{\Omega}|\nabla u(x)|^{p} w(x)\,dx\Big)^\frac{1}{p}<\infty.
\end{equation}
If $u\in W^{1,p}(\Om',w)$ for every $\Om'\Subset\Om$, then we say that $u\in W^{1,p}_{\mathrm{loc}}(\Om,w)$. The weighted Sobolev space with zero boundary value is defined as
$$
W^{1,p}_{0}(\Omega,w)=\overline{\big(C_{c}^{\infty}(\Omega),\|\cdot\|_{1,p,w}\big)}.
$$
\end{Definition}
Using the Poincar\'e inequality from \cite{Juh}, the norm defined by \eqref{norm1} on the space $W_{0}^{1,p}(\Omega,w)$ is equivalent to the norm given by
\begin{equation}\label{equinorm}
\|u\|_{W_0^{1,p}(\Om,w)}=\Big(\int_{\Omega}|\nabla u|^p w\,dx\Big)^\frac{1}{p}.
\end{equation}
Moreover, the space $W^{1,p}_{0}(\Omega,w)$ is a separable and uniformly convex Banach space, see \cite{Juh}.

We provide existence results for the following subclass of $p$-admissible weights given by $W_p^{s}$, see Theorem \ref{pur2thm1}. Moreover, we establish existence results for the whole class of $p$-admissible weights $w\in W_p$, in Theorem \ref{w2thm1} for $f=w$ in $\Om$. Let us remark that the class $W_{p}^{s}$ is introduced in \cite{Drabek} to study nonsingular weighted $p$-Laplace equations, where such class are crucial, for example to obtain boundedness estimates, which allows us to deal with the case $\gamma>1$ in Theorem \ref{pur2thm1}. To this end, below we state an embedding result. Let $1<p<\infty$ and define the set
\begin{equation}\label{I}
I=\Big[\frac{1}{p-1},\infty\Big)\cap\Big(\frac{N}{p},\infty\Big).
\end{equation}
Consider the following subclass of $W_p$ given by
\begin{equation}\label{wgtcls}
W_p^{s} = \Big\{w\in W_p: w^{-s}\in L^{1}(\Omega)\,\,\text{for some}\,\,s\in I\Big\}.
\end{equation}
Let $s\in I$, then we observe that
$$
w(x)=|x|^{\nu}\in W_p^{s}\text{ for any }\nu\in\Big(-N,\frac{N}{s}\Big).
$$
The following embedding result follows from \cite{Drabek}.
\begin{Lemma}\label{emb}
Let $1<p<\infty$ and $w \in W_p^{s}$ for some $s\in I$. Then the following continuous inclusion maps hold
\[
    W^{1,p}(\Omega,w)\hookrightarrow W^{1,p_s}(\Omega)\hookrightarrow 
\begin{cases}
    L^t(\Omega),& \text{for } p_s\leq t\leq p_s^{*}, \text{ if } 1\leq p_s<N, \\
    L^t(\Omega),& \text{ for } 1\leq t< \infty, \text{ if } p_s=N, \\
    C(\overline{\Omega}),& \text{ if } p_s>N,
\end{cases}
\]
where $p_s = \frac{ps}{s+1} \in [1,p)$.
Moreover, the second embedding above is compact except for $t=p_s^{*}=\frac{Np_s}{N-p_s}$, if $1\leq p_s<N$. Further, the same result holds for the space $W_{0}^{1,p}(\Omega,w)$.
\end{Lemma}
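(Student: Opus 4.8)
The plan is to reduce the entire statement to the classical unweighted Sobolev and Rellich--Kondrachov theorems on the bounded smooth domain $\Omega$; the only genuinely weighted ingredient is a single H\"older estimate exploiting that $w^{-s}\in L^{1}(\Omega)$.

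First I would establish the continuous inclusion $W^{1,p}(\Omega,w)\hookrightarrow W^{1,p_s}(\Omega)$. Fix $u\in W^{1,p}(\Omega,w)$ and let $g$ stand for $|u|$ or $|\nabla u|$. Writing $g^{p_s}=\big(g^{p_s}w^{p_s/p}\big)\,w^{-p_s/p}$ and applying H\"older's inequality with the conjugate exponents $\tfrac{p}{p_s}$ and $\tfrac{p}{p-p_s}$ (admissible since $p_s<p$), one gets
\[
\int_{\Omega} g^{p_s}\,dx\le\Big(\int_{\Omega} g^{p}\,w\,dx\Big)^{p_s/p}\Big(\int_{\Omega} w^{-\frac{p_s}{p-p_s}}\,dx\Big)^{(p-p_s)/p}.
\]
The computation that makes this work is $p-p_s=\tfrac{p}{s+1}$, so that $\tfrac{p_s}{p-p_s}=s$ and the last factor is exactly $\big(\int_{\Omega}w^{-s}\,dx\big)^{(p-p_s)/p}<\infty$. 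Adding the estimates for $|u|$ and $|\nabla u|$ yields $\|u\|_{W^{1,p_s}(\Omega)}\le C\,\|u\|_{1,p,w}$ with $C$ depending only on $p$, $s$ and $\|w^{-s}\|_{L^{1}(\Omega)}$. I would also record that $p_s\in[1,p)$: the upper bound is automatic, while $p_s\ge1$ is equivalent to $s(p-1)\ge1$, i.e.\ to $s\ge\tfrac1{p-1}$, which is one of the two defining constraints of $I$.

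Next I would invoke the classical embedding theory for $W^{1,p_s}(\Omega)$ on the bounded smooth domain $\Omega$: for $1\le p_s<N$, $W^{1,p_s}(\Omega)\hookrightarrow L^{t}(\Omega)$ for $p_s\le t\le p_s^{*}$, the embedding being compact for $t<p_s^{*}$; for $p_s=N$, the compact inclusion into $L^{t}(\Omega)$ for every finite $t$; and for $p_s>N$, the Morrey embedding $W^{1,p_s}(\Omega)\hookrightarrow C^{0,1-N/p_s}(\overline{\Omega})$, which composed with the compact (Arzel\`a--Ascoli) inclusion $C^{0,1-N/p_s}(\overline{\Omega})\hookrightarrow C(\overline{\Omega})$ gives a compact embedding into $C(\overline{\Omega})$. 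Composing these with the first inclusion, and using that a composition of continuous (resp.\ continuous--and--compact) maps is continuous (resp.\ compact), produces exactly the stated chain of maps together with the asserted compactness. For $W_{0}^{1,p}(\Omega,w)$ I would run the same argument after a density step: given $u\in W_{0}^{1,p}(\Omega,w)$, pick $\phi_j\in C_{c}^{\infty}(\Omega)$ with $\phi_j\to u$ in $\|\cdot\|_{1,p,w}$; by the inclusion just proved $\phi_j\to u$ in $W^{1,p_s}(\Omega)$, and since $W_{0}^{1,p_s}(\Omega)$ (the closure of $C_{c}^{\infty}(\Omega)$ there) is closed in $W^{1,p_s}(\Omega)$, the limit $u$ lies in $W_{0}^{1,p_s}(\Omega)$; then the unweighted theory applies verbatim.

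This is morally the argument of \cite[Theorem~2.6]{Garain} based on \cite{Drabek}, and I do not expect a real obstacle. The only points demanding care are the exponent bookkeeping that forces the weight-integrability exponent to be exactly $s$, the constraint $p_s\ge1$ (hence the hypothesis $s\ge\tfrac1{p-1}$), and selecting the correct classical embedding in the borderline regimes $p_s=N$ and $p_s>N$. I would also remark in passing that the other half of the hypothesis, $s>\tfrac{N}{p}$, is not needed for this lemma itself: when $p_s<N$ it is equivalent to $p_s^{*}>p$, which is what later makes $W^{1,p}(\Omega,w)\hookrightarrow\hookrightarrow L^{p}(\Omega)$ compact and drives the a priori estimates.
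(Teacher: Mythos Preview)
Your argument is correct and is precisely the approach the paper intends: the paper does not spell out a proof but merely refers to \cite[Theorem~2.6]{Garain} and \cite{Drabek}, whose content is exactly the H\"older step $\int_{\Omega}g^{p_s}\,dx\le\|g\|_{L^{p}(\Omega,w)}^{p_s}\|w^{-s}\|_{L^{1}(\Omega)}^{(p-p_s)/p}$ followed by the classical unweighted Sobolev and Rellich--Kondrachov theorems on the bounded smooth domain. Your exponent bookkeeping (in particular $p_s/(p-p_s)=s$ and $p_s\ge1\Leftrightarrow s\ge\tfrac{1}{p-1}$) and the closing remark that $s>N/p$ is only used later to guarantee $p_s^{*}>p$ are all accurate.
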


\begin{Remark}\label{Embrmk}
We note that if $0<c\leq w\leq d$ for some constants $c,d$, then $W^{1,p}(\Omega,w)=W^{1,p}(\Omega)$ and by the Sobolev embedding, Lemma \ref{emb} holds by replacing $p_s$ and $p_s^{*}$ with $p$ and $p^{*}=\frac{Np}{N-p}$ respectively.
\end{Remark}
Before stating our main results, let us define the notion of weak solutions in our setting.
\begin{Definition}\label{wksoldef}
Let $1<p<\infty$, $w\in W_p$, $f\in L^1(\Om)\setminus\{0\}$ is nonnegative and $\gamma\in C(\overline{\Omega})$ is positive. Then we say that $u\in W_{\mathrm{loc}}^{1,p}(\Om,w)$ is a weak solution of \eqref{maineqn} if $u>0$ in $\Om$ such that for every $\omega\Subset\Om$ there exists a positive constant $c(\omega)$ satisfying $u\geq c(\omega)>0$ in $\omega$ and for every $\phi\in C_c^{1}(\Om)$, one has
\begin{equation}\label{wksoleqn}
\int_{\Om}A(x,\nabla u)\nabla\phi\,dx=\int_{\Om}f(x)u^{-\gamma(x)}\phi\,dx,
\end{equation}
where $u=0$ on $\partial\Om$ is defined in the sense that for some $\theta\geq 1$, one has $u^{\theta}\in W_0^{1,p}(\Om,w)$.
\end{Definition}
Also, to deal with the variable exponent $\gamma$, we define a $\delta$- neighbourhood of $\Omega$ by
\begin{equation}\label{deltanbd}
\Omega_\delta:=\{x\in\Omega:\text{dist}(x,\partial\Omega)<\delta\},\quad \delta>0.
\end{equation}

\subsection{Statement of the main results:}
First, we state the following existence results concerning both variable and constant singular exponent. We remark that, for constant singular exponent $\gamma$, the assumptions on $f$ can be relaxed compared to the variable exponent as stated in the following result.
\begin{Theorem}\label{pur2thm1}(Existence)
Let $1<p<\infty$, $w\in W_p^{s}$ for some $s\in I$ and $\gamma\in C(\overline{\Omega})$ be positive.
\begin{enumerate}
\item[$(a)$] Suppose there exists $\delta>0$ such that $0<\gamma(x)\leq 1$ for every $x\in\Omega_\delta$, then the problem \eqref{maineqn} admits a weak solution in $W_0^{1,p}(\Omega,w)$, provided $f\in L^m(\Omega)\setminus\{0\}$ is nonnegative, where
\begin{equation}\label{vm1}
m =
\begin{cases}
(p_s^{*})^{'}, \text{ if } 1 \leq p_s < N, \text{ or }\\
>1\text{ if }p_s = N,\text{ or }\\
1\text{ if }p_s>N.
\end{cases}
\end{equation}

\item[$(b)$] Suppose $\gamma(x)\equiv \gamma\in(0,1)$ is a positive constant, then the problem \eqref{maineqn} admits a weak solution in $W_0^{1,p}(\Omega,w)$, provided $f\in L^m(\Omega)\setminus\{0\}$ is nonnegative, where
\begin{equation}\label{cm1}
m =
\begin{cases}
\big(\frac{p_s^{*}}{1-\gamma}\big)^{'}, \text{ if } 1 \leq p_s < N, \text{ or }\\
>1\text{ if }p_s = N,\text{ or }\\
1\text{ if }p_s>N.
\end{cases}
\end{equation}


\item[$(c)$] Suppose $\gamma(x)\equiv 1$, then for any nonnegative $f\in L^1(\Omega)\setminus\{0\}$, the problem \eqref{maineqn} admits a weak solution in $W_0^{1,p}(\Omega,w)$.

\item[$(d)$] Suppose there exists $\delta>0$ and $\gamma^*>1$ such that $\|\gamma\|_{L^\infty(\Omega_\delta)}\leq\gamma^*$, then the problem \eqref{maineqn} admits a weak solution $v\in W_{\mathrm{loc}}^{1,p}(\Omega,w)$ such that $v^\frac{\gamma^{*}+p-1}{p}\in W_0^{1,p}(\Omega,w)$, provided $f\in L^m(\Omega)\setminus\{0\}$ is nonnegative, where
\begin{equation}\label{vm2}
m =
\begin{cases}
\Big(\frac{(\gamma^{*}+p-1)p_s^{*}}{p\gamma^{*}}\Big)^{'}, \text{ if } 1 \leq p_s < N, \text{ or }\\
\Big(\frac{(\gamma^{*}+p-1)l}{p\gamma^{*}}\Big)^{'}\text{ if } \frac{p\gamma^{*}}{\gamma^{*}+p-1}<l<\infty\text{ and }p_s = N,\text{ or }\\
1\text{ if }p_s>N.
\end{cases}
\end{equation}

\item[$(e)$] Suppose $\gamma(x)\equiv \gamma>1$ is a positive constant, then for any nonnegative $f\in L^1(\Omega)\setminus\{0\}$, the problem \eqref{maineqn} admits a weak solution $v\in W_{\mathrm{loc}}^{1,p}(\Omega,w)$ such that $v^\frac{\gamma+p-1}{p}\in W_0^{1,p}(\Omega,w)$.
\end{enumerate}
\end{Theorem}

Moreover, we have the following existence result for any $w\in W_p$ when $f=w$ in $\Omega$.

\begin{Theorem}\label{w2thm1}(Existence)
Let $1<p<\infty$, $w\in W_p $ and $f=w$ in $\Om$. Assume that $\gamma\in C(\overline{\Omega})$ is positive. Suppose there exists $\delta>0$ such that $0<\gamma(x)\leq 1$ for every $x\in\Omega_\delta$, then the problem \eqref{maineqn} admits a weak solution in $W_0^{1,p}(\Omega,w)$.
\end{Theorem}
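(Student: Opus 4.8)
The plan is to run the approximation scheme of Boccardo--Orsina \cite{Boc} in the weighted nonlinear setting, using the special choice $f=w$ precisely to make the weighted Sobolev--Poincar\'e inequality do the work of an a priori estimate. First I would fix $n\in\mathbb N$ and consider the regularized problem
\[
-\mathrm{div}(A(x,\nabla u_n))=\frac{w(x)}{(u_n^{+}+\tfrac1n)^{\gamma(x)}}\ \text{ in }\Om,\qquad u_n=0\ \text{ on }\partial\Om .
\]
For fixed $n$ the right-hand side is bounded by a constant multiple of $w$, hence (by condition (iii)) defines an element of the dual of $W_0^{1,p}(\Om,w)$, so Lemma \ref{exisapprox} gives a solution $u_n\in W_0^{1,p}(\Om,w)$. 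Testing with $-u_n^{-}$ and using $(H_2)$ gives $u_n\ge0$; since the datum is nonnegative and not identically zero, the weak Harnack / strong maximum principle for weighted quasilinear operators (\cite{Juh}) yields $u_n>0$ in $\Om$ and, for every $\omega\Subset\Om$, a constant $c(\omega)>0$ independent of $n$ with $u_n\ge u_1\ge c(\omega)$ on $\omega$. Monotonicity $u_n\le u_{n+1}$ follows by comparison: testing the difference of the two equations with $(u_n-u_{n+1})^{+}$ and using the strict monotonicity $(H_3)$ together with $\tfrac1n>\tfrac1{n+1}$ forces $\nabla(u_n-u_{n+1})^{+}=0$, hence $(u_n-u_{n+1})^{+}=0$.

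The core step is a uniform bound in $W_0^{1,p}(\Om,w)$, and this is exactly where $f=w$ and the hypothesis $0<\gamma\le1$ on $\Om_\delta$ enter. Testing the $n$-th equation with $u_n$ and using $(H_2)$,
\[
\alpha\int_{\Om}|\nabla u_n|^{p}\,w\,dx\le\int_{\Om}w\,\frac{u_n}{(u_n+\tfrac1n)^{\gamma(x)}}\,dx .
\]
On $\Om_\delta$, where $0\le 1-\gamma(x)<1$, one estimates $\frac{u_n}{(u_n+1/n)^{\gamma(x)}}\le(u_n+1)^{1-\gamma(x)}\le u_n+1$; on the compact set $\Om\setminus\Om_\delta$ the lower bound $u_n\ge u_1\ge c_\delta>0$ gives $\frac{u_n}{(u_n+1/n)^{\gamma(x)}}\le (c_\delta')^{-1}u_n$ for some $c_\delta'>0$. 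Hence the right-hand side is $\le C_\delta\mu(\Om)+C_\delta\int_{\Om}w\,u_n\,dx$, and by H\"older and the weighted Sobolev inequality (condition (iii), applied after extending $u_n$ by zero to a ball containing $\Om$) one has $\int_{\Om}w\,u_n\,dx\le C\big(\int_{\Om}w|\nabla u_n|^{p}\,dx\big)^{1/p}$. Since $p>1$ the sublinear term is absorbed, yielding $\int_{\Om}w|\nabla u_n|^{p}\,dx\le C$ uniformly in $n$. (For a general datum $f\in L^1(\Om)$ this breaks down, since $\int_\Om f u_n$ need not be controlled by $\|\nabla u_n\|_{L^p(\Om,w)}$; the choice $f=w$ is what makes the weighted embedding applicable, which is why Theorem \ref{w2thm1} covers all of $W_p$.)

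With the uniform bound, reflexivity of $W_0^{1,p}(\Om,w)$ gives a subsequence with $u_n\rightharpoonup u$ weakly there; since $u_n$ is increasing and bounded in $L^{1}(\Om,w)$, it converges a.e., and the monotone limit coincides with $u$ by uniqueness of the limit in $L^1(\Om,w)$, so $u\in W_0^{1,p}(\Om,w)$ and $u>0$ in $\Om$ with the local lower bounds inherited from $u_1$. To pass to the limit in the nonlinear principal part I would invoke the gradient convergence theorem for weighted equations of Mikkonen \cite{Mikko}: the data $\frac{w}{(u_n+1/n)^{\gamma(x)}}$ are nonnegative and, on every $K\Subset\Om$, bounded in $L^1(K)$ uniformly in $n$ (again using $u_n\ge u_1\ge c(K)>0$ on $K$), hence $\nabla u_n\to\nabla u$ a.e.\ in $\Om$. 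Together with $(H_1)$ (which makes $w^{-1}A(x,\nabla u_n)$ bounded in $L^{p'}(\Om,w)$) this is enough to pass to the limit in $\int_{\Om}A(x,\nabla u_n)\nabla\phi\,dx$ for $\phi\in C_c^1(\Om)$, while on $\mathrm{supp}\,\phi$ the positive lower bound on $u_n$ and dominated convergence give $\int_{\Om}\frac{w}{(u_n+1/n)^{\gamma(x)}}\phi\,dx\to\int_{\Om}\frac{w}{u^{\gamma(x)}}\phi\,dx$. Thus $u$ is a weak solution and, being in $W_0^{1,p}(\Om,w)$, it satisfies the boundary condition with $\theta=1$.

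I expect the two genuine difficulties to be, first, the uniform $W_0^{1,p}(\Om,w)$ estimate above: it works only because $f=w$ unlocks the weighted Sobolev inequality and because $\gamma\le1$ near $\partial\Om$ prevents the regularized datum from blowing up in the region where $u_n$ is small; and second, the a.e.\ convergence of the gradients, where weak convergence is genuinely insufficient for the nonlinear operator $A$ and one must rely on the weighted gradient convergence theorem, the delicate point being the verification of its local $L^1$ hypothesis on the data. It is the strict monotonicity $(H_3)$, rather than any Peral-type algebraic inequality, that makes the comparison and limit arguments go through for every $1<p<\infty$.
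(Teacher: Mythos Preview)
Your proposal is correct and follows essentially the same route as the paper: the approximation scheme of Lemma~\ref{exisapprox} (existence, positivity, monotonicity, and local lower bounds for $u_n$), the uniform $W_0^{1,p}(\Om,w)$ estimate of Lemma~\ref{est2} obtained by testing with $u_n$ and using $f=w$ together with H\"older and the weighted Poincar\'e inequality, and then the passage to the limit exactly as in Theorem~\ref{pur2thm1}(a), with Mikkonen's gradient convergence theorem \cite{Mikko} handling the nonlinear principal part. The only cosmetic difference is that the paper controls $\int_\Om w u_n\,dx$ via H\"older and the equivalent norm \eqref{equinorm}, whereas you phrase it through condition~(iii) of the $p$-admissibility axioms; both give $\int_\Om w u_n\,dx\le C\|u_n\|$ and the rest is identical.
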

Further, we have the following regularity results when $\gamma(x)\equiv\gamma$ is a positive constant. Taking into account Lemma \ref{exisapprox}, the proof of Theorem \ref{regularity delta less}, \ref{regularity delta equal} and \ref{regularity delta greater} stated below follows along the lines of the proof of Theorem 3.3, 3.5 and 3.7 respectively in \cite{Garain}.
\begin{Theorem}\label{regularity delta less}(Regularity)
Let $\gamma(x)\equiv \gamma\in(0,1)$, then the solution $v$ given by Theorem \ref{pur2thm1}-$(b)$ satisfy the following properties:
\begin{enumerate}
\item[(a)] For $1 \leq p_s < N,$
\begin{enumerate}
\item[(i)] if $f \in L^{m}(\Omega)$ for some $m \in\big[(\frac{p_s^{*}}{1-\gamma})^{'},\frac{p_s^{*}}{p_s^{*}-p}\big)$, then $v \in L^t(\Omega)$, $t = p_s^{*}\,\delta$ where $\delta = \frac{(\gamma+p-1)m^{'}}{(pm^{'}-p_{s}^*)}$.
\item[(ii)] if $f \in L^m(\Omega)$ for some $m > \frac{p_s^{*}}{p_s^{*}-p}$, then $v \in L^\infty(\Omega)$.
\end{enumerate}
\item[(b)] Let $p_s = N$ and assume $q > p$. Then if $f \in L^m(\Omega)$ for some $m \in \big((\frac{q}{1-\delta})',\frac{q}{q-p}\big)$, we have $v \in L^t(\Omega)$, $t = p\,\delta$ where $\delta= \frac{pm'}{pm'-q}$.
\item[(c)] For $p_s > N$ and $f \in L^1(\Omega)$, we have $v \in L^\infty(\Omega)$.
\end{enumerate}
\end{Theorem}
\begin{Theorem}\label{regularity delta equal}(Regularity)
Let $\gamma(x)\equiv 1$, then the solution $v$ given by Theorem \ref{pur2thm1}-$(c)$ satisfy the following properties:
\begin{enumerate}
\item[(a)] For $1 \leq p_s < N$,
\begin{enumerate}
\item[(i)] if $f  \in L^{m}(\Omega)$ for some $m \in \big(1,\frac{p_s^{*}}{p_s^{*}-p}\big)$, then $v \in L^t(\Omega)$, $t = p_s^{*}\delta$, where $\delta = \frac{pm^{'}}{(pm^{'}-p_{s}^*)}$.
\item[(ii)] if $f \in L^m(\Omega)$ for some $m > \frac{p_s^{*}}{p_s^{*}-p}$, then $v \in L^\infty(\Omega)$.
\end{enumerate}
\item[(b)] Let $p_s = N$ and $q > p$. Then if $f \in L^m(\Omega)$ for some $m \in\big(1,\frac{q}{q-p}\big)$, we have $v \in L^t(\Omega)$, $t = q\,\gamma$, where $\gamma = \frac{pm^{'}}{pm^{'}-q}$.
\item[(c)] For $p_s > N$ and $f \in L^1(\Omega)$, we have $v \in L^\infty(\Omega)$.
\end{enumerate}
\end{Theorem}
\begin{Theorem}\label{regularity delta greater}(Regularity)
Let $\gamma(x)\equiv \gamma>1,$ then the solution $v$ given by Theorem \ref{pur2thm1}-$(e)$ satisfies the following properties:
\begin{enumerate}
\item[(a)] For $1 \leq p_s < N,$
\begin{enumerate}
\item[(i)] if $f \in L^{m}(\Omega)$ for some $m \in \big(1,\frac{p_s^{*}}{p_s^{*}-p}\big)$, then $v \in L^t(\Omega)$ where $t = p_s^{*}\,\delta$, where $\delta = \frac{(\gamma+p-1)m'}{pm'-p_s^*}$.
\item[(ii)] if $f \in L^m(\Omega)$ some $m > \frac{p_s^{*}}{p_s^{*}-p}$, then $v \in L^\infty(\Omega)$.
\end{enumerate}
\item[(b)] Let $p_s = N$ and assume $q > p$. Then if $f \in L^m(\Omega)$ for some $m \in \big(1,\frac{q}{q-p}\big)$,  we have $v \in L^t(\Omega)$, $t = q\,\delta$, where $\delta = \frac{(\gamma+p-1)m^{'}}{pm^{'}-q}$.
\item[(c)] For $p_s > N$ and $f \in L^1(\Omega)$, we have $v \in L^\infty(\Omega)$.
\end{enumerate}
\end{Theorem}
\begin{Remark}\label{prmk1}
If $c\leq w\leq d$ for some positive constant $c,d$, then taking into account Remark \ref{Embrmk} and arguing similarly, it follows that all the above existence and regularity results will be valid by replacing $p_s$ and $p_s^{*}$ with $p$ and $p^{*}$ respectively.
\end{Remark}
\textbf{Notation:} For $u\in W_0^{1,p}(\Om,w)$, denote by $\|u\|$ to mean the norm $\|u\|_{W_0^{1,p}(\Om,w)}$ as defined by \eqref{equinorm}. For given constants $c,d$, a set $S$ and a function $u$, by $c\leq u\leq d$ in $S$, we mean $c\leq u\leq d$ almost everywhere in $S$. Moreover, we write $|S|$ to denote the Lebesgue measure of $S$. The symbol $\langle,\rangle$ denotes the standard inner product in $\mathbb{R}^N$. The conjugate exponent of $\theta>1$ is denoted by $\theta'=\frac{\theta}{\theta-1}$. We denote by $p^*=\frac{Np}{N-p}$ if $1<p<N$ and $p_{s}^*=\frac{Np_{s}}{N-p_{s}}$ if $1\leq p_s<N$. For $a\in\mathbb{R}$, we denote by $a^+:=\max\{a,0\}$, $a^-=\max\{-a,0\}$. We write by $c,C$ or $c_i,C_i$ for $i\in\mathbb{N}$ to mean a constant which may vary from line to line or even in the same line. The dependencies of the constants are written in parenthesis.

\section{Preliminary results}
In the spirit of Boccardo-Orsina \cite{Boc}, for $1<p<\infty$, we investigate the following approximate problem
\begin{equation}\label{approx}
-\text{div}(A(x,\nabla u))=f_n(x)\Big(u^{+}+\frac{1}{n}\Big)^{-\gamma(x)}\text{ in }\Omega,\quad u=0\text{ on }\partial\Omega,
\end{equation}
where $f_n(x)=\min\{f(x),n\}$ for $x\in\Omega$, $n\in\mathbb{N}$, $f\in L^1(\Omega)\setminus\{0\}$ is a nonnegative function, $\gamma\in C(\overline{\Omega})$ is positive and $w\in W_p$. We obtain existence of solutions for the problem \eqref{approx} for each $n\in\mathbb{N}$ and prove some a priori estimates below to prove our main results. To this end, first we state the following result from \cite[Theorem $9.14$]{var}, which is useful to prove Lemma \ref{exisapprox} below.

\begin{Theorem}\label{MB}
Let $V$ be a real separable reflexive Banach space and $V^*$ be the dual of $V$. Suppose that $T:V\to V^{*}$ is a coercive and demicontinuous monotone operator. Then $T$ is surjective, i.e., given any $f\in V^{*}$, there exists $u\in V$ such that $T(u)=f$. If $T$ is strictly monotone, then $T$ is also injective.  
\end{Theorem}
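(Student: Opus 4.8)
The plan is to prove surjectivity by the classical Galerkin method combined with Minty's monotonicity trick, and then to read off injectivity directly from strict monotonicity. Fix $f\in V^*$. Since $V$ is separable, I would first choose vectors $w_1,w_2,\dots\in V$ that are linearly independent and whose finite linear combinations are dense in $V$, and set $V_n=\mathrm{span}\{w_1,\dots,w_n\}$. The first step is to solve, for each $n$, the finite-dimensional system: find $u_n\in V_n$ with $\langle T(u_n),w_j\rangle=\langle f,w_j\rangle$ for $j=1,\dots,n$. Identifying $V_n$ with $\R^n$ via the basis, this amounts to finding a zero of the map $G:\R^n\to\R^n$ defined by $G(\xi)_j=\langle T(\sum_i\xi_iw_i)-f,\,w_j\rangle$. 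Continuity of $G$ follows from demicontinuity of $T$ restricted to the finite-dimensional space $V_n$, where weak and strong convergence coincide, so the pairings $\langle T(\cdot),w_j\rangle$ depend continuously on $\xi$. Writing $u=\sum_i\xi_iw_i$, coercivity gives $G(\xi)\cdot\xi=\langle T(u)-f,u\rangle\geq\bigl(\tfrac{\langle T(u),u\rangle}{\|u\|}-\|f\|_{V^*}\bigr)\|u\|$, which is strictly positive once $\|u\|$ is large. By the standard consequence of Brouwer's fixed point theorem (the acute-angle lemma: a continuous field pointing outward on a sphere has a zero inside), $G$ vanishes at some $u_n\in V_n$ in a fixed ball.

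Next I would extract limits. Testing the Galerkin equation with $u_n$ itself, which is legitimate since $u_n\in V_n$, yields $\langle T(u_n),u_n\rangle=\langle f,u_n\rangle$, so coercivity forces a uniform bound $\|u_n\|\leq M$. Reflexivity of $V$ then provides a subsequence $u_n\rightharpoonup u$ in $V$. Because an everywhere-defined monotone operator is locally bounded, the boundedness of $\{u_n\}$ implies $\{T(u_n)\}$ is bounded in $V^*$; as $V^*$ is reflexive, a further subsequence satisfies $T(u_n)\rightharpoonup\chi$ in $V^*$. For fixed $j$ and $n\geq j$, letting $n\to\infty$ in $\langle T(u_n),w_j\rangle=\langle f,w_j\rangle$ gives $\langle\chi,w_j\rangle=\langle f,w_j\rangle$ for every $j$, whence $\chi=f$ by density of the $w_j$.

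The decisive step is Minty's trick, which identifies $\chi=T(u)$. The energy convergence $\langle T(u_n),u_n\rangle=\langle f,u_n\rangle\to\langle f,u\rangle=\langle\chi,u\rangle$ is the key fact that upgrades weak convergence enough to exploit monotonicity. For arbitrary $v\in V$, monotonicity gives $\langle T(u_n)-T(v),u_n-v\rangle\geq0$; expanding and passing to the limit using $u_n\rightharpoonup u$, $T(u_n)\rightharpoonup\chi$ and the energy convergence yields $\langle\chi-T(v),u-v\rangle\geq0$ for all $v$. Substituting $v=u-tz$ with $t>0$ and arbitrary $z\in V$, dividing by $t$, and letting $t\to0^+$ while invoking demicontinuity of $T$ gives $\langle\chi-T(u),z\rangle\geq0$ for every $z$; replacing $z$ by $-z$ forces $\langle\chi-T(u),z\rangle=0$ for all $z$, so $\chi=T(u)$. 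Combined with $\chi=f$, this gives $T(u)=f$, proving surjectivity. Finally, injectivity under strict monotonicity is immediate: if $T(u_1)=T(u_2)$ with $u_1\neq u_2$, then $0=\langle T(u_1)-T(u_2),u_1-u_2\rangle>0$, a contradiction.

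I expect the main obstacle to be exactly this identification of the weak limit: since $T$ is only demicontinuous, weak convergence $u_n\rightharpoonup u$ gives no direct control on $T(u_n)$, and without monotonicity one cannot conclude $\chi=T(u)$. Minty's trick circumvents this, but it relies on the two supporting facts — the energy identity $\langle T(u_n),u_n\rangle\to\langle\chi,u\rangle$ obtained from self-testing the Galerkin equations, and the local boundedness of monotone operators used to extract the limit $\chi$ — so these are the points where care is needed.
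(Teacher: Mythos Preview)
Your proof is correct and is precisely the standard Galerkin--Minty argument that underlies the Minty--Browder theorem. However, there is nothing to compare it to: the paper does not prove Theorem~\ref{MB} at all. It merely quotes the statement from \cite[Theorem~9.14]{var} and then applies it as a black box in the proof of Lemma~\ref{exisapprox}. So you have supplied a complete proof where the paper gives only a citation.

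One minor remark on your write-up: the step where you invoke ``local boundedness of monotone operators'' to conclude that $\{T(u_n)\}$ is bounded in $V^*$ is valid, but it is worth noting that in many textbook presentations one instead assumes outright that $T$ is bounded (maps bounded sets to bounded sets), which is how the result is stated in some references. Your version, relying on the Rockafellar-type fact that an everywhere-defined monotone operator is locally bounded, is the sharper formulation and matches the hypotheses as stated here. Everything else---the acute-angle lemma for the finite-dimensional solvability, the self-testing to get the energy identity, and the Minty trick with $v=u-tz$ exploiting demicontinuity---is carried out correctly.
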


\begin{Lemma}\label{exisapprox}
Let $1<p<\infty$. Suppose $w\in W_p^{s}$ for some $s\in I$ or $w=f$ in $\Om$. Then for every $n\in\mathbb{N}$, the problem \eqref{approx} admits a positive weak solution $v_{n}\in W_0^{1,p}(\Om,w)$ such that for every $\omega\Subset\Omega$, there exists a positive constant $C(\omega)$ satisfying $v_n\geq C(\omega)>0$ in $\omega$.
Moreover, $v_n$ is unique for every $n\in\mathbb{N}$ and $v_{n+1}\geq v_n$ for every $n\in\mathbb{N}$ in $\Omega$. In addition, $\{v_n\}_{n\in\N}\subset L^\infty(\Om)$, if $w\in W_p^{s}$ for some $s\in I$.
\end{Lemma}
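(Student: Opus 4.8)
\textbf{Proof plan for Lemma \ref{exisapprox}.}

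The plan is to fix $n\in\mathbb{N}$ and solve \eqref{approx} by a direct application of the Minty--Browder theorem (Theorem \ref{MB}) on the space $V=W_0^{1,p}(\Om,w)$, which is separable, reflexive and uniformly convex by the discussion in Section $2$. For $u\in V$ define $T:V\to V^*$ by
\[
\langle T(u),\phi\rangle=\int_{\Om}A(x,\nabla u)\nabla\phi\,dx,
\]
which is well defined and bounded by $(H_1)$ together with the Hölder inequality in the measure $d\mu=w\,dx$, and define the functional
\[
\langle L,\phi\rangle=\int_{\Om}f_n(x)\Big(u^{+}+\tfrac1n\Big)^{-\gamma(x)}\phi\,dx
\]
for fixed $u$; since $f_n\le n$ and $\gamma>0$ is continuous on the compact set $\overline\Om$, the weight $(u^{+}+\frac1n)^{-\gamma(x)}$ is bounded between $n^{-\|\gamma\|_\infty}$-type constants, so $L\in V^*$ by Lemma \ref{emb} (or, when $w=f$, directly because the right-hand side is controlled by $\|\phi\|_{L^1(\Om,w)}$ and hence by $\|\phi\|$). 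Strict monotonicity of $T$ follows from $(H_3)$, demicontinuity from the continuity of $\xi\mapsto A(x,\xi)$ and dominated convergence using $(H_1)$, and coercivity from $(H_2)$, which gives $\langle T(u),u\rangle\ge\alpha\|u\|^p$. To close the fixed-point loop I would either (i) set up the map $S:V\to V$ sending $u$ to the unique solution of $T(v)=L(u)$ and apply Schauder, or more cleanly (ii) incorporate the truncated singular term directly: the operator $\phi\mapsto\int f_n(u^{+}+\frac1n)^{-\gamma(x)}\phi$ is, as a function of $u$, bounded and continuous in a way that keeps the full operator $u\mapsto T(u)-f_n(u^{+}+\frac1n)^{-\gamma(x)}$ pseudomonotone and coercive (the singular term is a bounded, hence compact lower-order perturbation after composing with the compact embedding in Lemma \ref{emb}), so Theorem \ref{MB}-type surjectivity applies. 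Either route yields a weak solution $v_n\in W_0^{1,p}(\Om,w)$.

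Next I would establish positivity and the local lower bounds. Testing \eqref{approx} with $v_n^{-}=\max\{-v_n,0\}\in W_0^{1,p}(\Om,w)$ and using $(H_2)$ together with the fact that $f_n(u^{+}+\frac1n)^{-\gamma(x)}\ge 0$ forces $\int_\Om|\nabla v_n^{-}|^p\,d\mu\le 0$, hence $v_n\ge 0$ a.e. Thus $v_n^{+}=v_n$ and \eqref{approx} becomes $-\operatorname{div}(A(x,\nabla v_n))=f_n(v_n+\frac1n)^{-\gamma(x)}\ge 0$, so $v_n$ is a nonnegative weighted $A$-supersolution; moreover $v_n\not\equiv 0$ since $f\not\equiv 0$ and $f_n\to f$ gives $f_n\not\equiv 0$ for $n$ large, while for small $n$ one replaces the argument by testing with $v_n$ itself to see $\alpha\|v_n\|^p\le\int f_n(v_n+\frac1n)^{-\gamma}v_n$, which is positive. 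The strong maximum principle (Harnack inequality) for weighted quasilinear equations under $(H_1)$--$(H_4)$ with $w\in W_p$, as in \cite{Juh}, then yields that for every $\omega\Subset\Om$ there is $C(\omega)>0$ with $v_n\ge C(\omega)$ in $\omega$. Uniqueness of $v_n$ for fixed $n$: if $v_n,\tilde v_n$ are two solutions, test the difference of the equations with $(v_n-\tilde v_n)$ and use $(H_3)$ on the left together with the fact that $t\mapsto f_n(t+\frac1n)^{-\gamma(x)}$ is nonincreasing on $[0,\infty)$, so the right-hand side contributes a term of the favorable sign; this gives $\int_\Om\langle A(x,\nabla v_n)-A(x,\nabla\tilde v_n),\nabla(v_n-\tilde v_n)\rangle\,dx\le 0$, forcing $\nabla v_n=\nabla\tilde v_n$ a.e. and hence $v_n=\tilde v_n$.

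For monotonicity $v_{n+1}\ge v_n$, I subtract the two equations and test with $(v_n-v_{n+1})^{+}\in W_0^{1,p}(\Om,w)$. On the set $\{v_n>v_{n+1}\}$ one has, using $f_n\le f_{n+1}$, $\frac1n>\frac1{n+1}$, the monotonicity of $t\mapsto f(t+c)^{-\gamma(x)}$ in both $t$ and $c$, and $v_{n+1}<v_n$,
\[
f_n(x)\Big(v_n+\tfrac1n\Big)^{-\gamma(x)}\le f_{n+1}(x)\Big(v_{n+1}+\tfrac1{n+1}\Big)^{-\gamma(x)},
\]
so the right-hand side of the tested difference is $\le 0$, while the left-hand side equals $\int_{\{v_n>v_{n+1}\}}\langle A(x,\nabla v_n)-A(x,\nabla v_{n+1}),\nabla(v_n-v_{n+1})\rangle\,dx\ge 0$ by $(H_3)$; combining gives $(v_n-v_{n+1})^{+}=0$, i.e. $v_{n+1}\ge v_n$. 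Finally, for the $L^\infty$ bound when $w\in W_p^{s}$: since $v_n$ solves a weighted $A$-equation with right-hand side $f_n(v_n+\frac1n)^{-\gamma(x)}\le n^{1+\|\gamma\|_\infty}$ bounded (for fixed $n$) — or, more usefully for later, with right-hand side controlled in an appropriate $L^m$ norm — I would run a Stampacchia/De Giorgi iteration using $(H_2)$ and the embedding $W^{1,p}(\Om,w)\hookrightarrow W^{1,p_s}(\Om)$ from Lemma \ref{emb} together with the Sobolev inequality for $p_s$, exactly as in \cite{Drabek} and \cite{Garain}, to obtain $v_n\in L^\infty(\Om)$.

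The main obstacle I anticipate is the coercivity/pseudomonotonicity bookkeeping needed to apply Theorem \ref{MB} with the singular zeroth-order term present, and — more delicately — justifying the qualitative tools (weighted strong maximum principle / Harnack inequality and the De Giorgi iteration in the weighted Sobolev space) under the sole structural hypotheses $(H_1)$--$(H_4)$ and $w\in W_p$ (resp. $W_p^{s}$); these rely on the Sobolev--Poincaré machinery of \cite{Juh} and \cite{Drabek}, and care is needed because the embedding in Lemma \ref{emb} changes the Sobolev exponent from $p$ to $p_s$, so all iteration exponents must be tracked accordingly.
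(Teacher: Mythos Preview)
Your proposal is essentially correct and follows the same overall architecture as the paper: Minty--Browder for existence, testing with the negative part for nonnegativity, the weighted weak Harnack inequality from \cite{Juh} for the local lower bound, testing with $(v_n-v_{n+1})^{+}$ for monotonicity in $n$, and a De~Giorgi/Stampacchia iteration via Lemma \ref{emb} (as in \cite{Garain,Drabek}) for the $L^\infty$ bound. The differences are the following.

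\textbf{Existence.} The paper avoids both of your routes (Schauder and pseudomonotone perturbation). It defines a single operator $T:V\to V^*$ that already contains the zeroth-order term,
\[
\langle T(v),\phi\rangle=\int_{\Om}A(x,\nabla v)\nabla\phi\,dx-\int_{\Om}f_n(x)\Big(v^{+}+\tfrac1n\Big)^{-\gamma(x)}\phi\,dx,
\]
and observes that, since $t\mapsto (t^{+}+\frac1n)^{-\gamma(x)}$ is nonincreasing, the singular term contributes with the \emph{favorable} sign to $\langle T(v_1)-T(v_2),v_1-v_2\rangle$. Thus $T$ is genuinely monotone (not merely pseudomonotone), demicontinuous, and coercive (with $\langle T(v),v\rangle\ge\alpha\|v\|^p-C\|v\|$), so Theorem \ref{MB} applies directly. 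This is precisely the ``favorable sign'' observation you already use for uniqueness, applied one step earlier; it dissolves the obstacle you flag in your last paragraph and is shorter than either of your alternatives.

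\textbf{Two minor points.} First, the Harnack argument gives a priori only $v_n\ge C(\omega,n)$; to obtain $C(\omega)$ independent of $n$ as the statement requires, the paper uses the monotonicity $v_n\ge v_1$ and sets $C(\omega)=C(\omega,1)$ --- you should make this step explicit. Second, since $f_n=\min\{f,n\}$ with $f\ge 0$ and $f\not\equiv 0$, one has $f_n\not\equiv 0$ for \emph{every} $n\ge 1$, so your case distinction ``for small $n$'' is unnecessary.
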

\begin{proof}
We prove the result only by considering the case when $w\in W_p^{s}$ for some $s\in I$, since if $w=f$ in $\Om$, the proof is similar. First, we prove the existence of $v_n$. Let $V=W_0^{1,p}(\Om,w)$ and $V^*$ be the dual of $V$. We define $T:V\to V^*$ by
$$
\langle T(v),\phi\rangle=\int_{\Om}A(x,\nabla v)\nabla\phi\,dx-\int_{\Om}f_n(x)\Big(v^{+}+\frac{1}{n}\Big)^{-\gamma(x)}\phi\,dx,\quad \forall v,\phi\in V.
$$
Note that $T$ is well defined. Indeed, using the H$\ddot{\text{o}}$lder's inequality and hypothesis $(H_1)$ along Lemma \ref{emb}, we obtain 
\begin{align*}
|\langle T(v),\phi\rangle\big|&=\left|\int_{\Om}A(x,\nabla v)\nabla\phi\,dx-\int_{\Om}f_n(x)\Big(v^{+}+\frac{1}{n}\Big)^{-\gamma(x)}\phi\,dx\right|\\
&\leq \beta\int_{\Om}(w^\frac{1}{p^{'}}|\nabla v|^{p-1})(w^\frac{1}{p}|\nabla\phi|)\,dx+n^{1+\|\gamma\|_{L^\infty(\Om)}}\int_{\Om}|\phi|\,dx\\
&\leq C(\|v\|^{p-1}\|\phi\|+\|\phi\|)\leq C\|\phi\|,
\end{align*}
for some positive constant $C$. \\
\textbf{Coercivity:} Using $(H_2)$, H\"older's inequality and Lemma \ref{emb}, we obtain
$$
\langle T(v),v\rangle=\int_{\Om}A(x,\nabla v)\nabla v\,dx-\int_{\Om}f_n(x)\Big(v^+ +\frac{1}{n}\Big)^{-\gamma(x)}v\,dx\geq \alpha\|v\|^{p}-C\|v\|.
$$
Therefore, since $1<p<\infty$, $T$ is coercive.\\
\textbf{Demicontinuity:} Let $v_k\to v$ in the norm of $V$ as $k\to\infty$. Then $w^\frac{1}{p}\nabla v_k\to w^\frac{1}{p}\nabla v$ strongly in $L^p(\Omega)$ as $k\to\infty$. Therefore up to a subsequence, still denoted by $v_k$, we have $v_k(x)\to v(x)$ and $\nabla v_{k}(x)\to\nabla v(x)$ pointwise for a.e. $x\in\Omega$. Since the function $A(x,\cdot)$ is continuous in the second variable, we have
$$
w(x)^{{-}\frac{1}{p}}A\big(x,\nabla v_{k}(x)\big)\to w(x)^{-\frac{1}{p}}A\big(x,\nabla v(x)\big)
$$ pointwise for a.e. $x\in\Omega.$ Now using the growth condition $(H_1)$ and the norm boundedness of $v_k$, we obtain
\begin{align*}
||w^{-\frac{1}{p}}A(x,\nabla v_{k})||^{p^{'}}_{L^{p^{'}}(\Omega)}
&=\int_{\Omega}w^{-\frac{1}{p-1}}(x)\big|A\big(x,\nabla v_{k}(x)\big)\big|^{p^{'}}\,dx\\
&\leq\beta^{p^{'}}\int_{\Omega}w(x)|\nabla v_{k}(x)|^p\,dx\leq C,
\end{align*}
for some positive constant $C$, independent of $k$. Therefore, upto a subsequence
$$
w^{-\frac{1}{p}}A\big(x,\nabla v_{k}(x)\big)\rightharpoonup w^{-\frac{1}{p}}A\big(x,\nabla v(x)\big)
$$ weakly in $L^{p^{'}}(\Om)$ and since the weak limit is independent of the choice of the subsequence, the above weak convergence holds for every $k$. Now $\phi\in V$ implies that $w^\frac{1}{p}\nabla\phi\in L^p(\Omega)$ and therefore by the above weak convergence, we obtain
\begin{equation}\label{d1}
\lim_{k\to\infty}\int_{\Om}A(x,\nabla v_k(x))\nabla\phi\,dx=\int_{\Om}A(x,\nabla v(x))\nabla\phi\,dx.
\end{equation}
Moreover, since $v_k(x)\to v(x)$ pointwise a.e. in $\Om$ and for every $\phi\in V$,  
$$
\left|f_n(v_k^{+}+\frac{1}{n})^{-\gamma(x)}\phi-f_n(v^{+}+\frac{1}{n})^{-\gamma(x)}\phi\right|\leq n^{1+\|\gamma\|_{L^\infty(\Om)}}|\phi|.
$$
Note that by Lemma \ref{emb}, $\phi\in L^1(\Om)$. Then using the Lebesgue's dominated convergence theorem, we have
\begin{equation}\label{d2}
\lim_{k\to\infty}\int_{\Om}f_n\Big(v_k^{+}+\frac{1}{n}\Big)^{-\gamma(x)}\phi\,dx=\int_{\Om}f_n\Big(v^{+}+\frac{1}{n}\Big)^{-\gamma(x)}\phi\,dx,\quad \forall\phi\in V.
\end{equation}
Therefore, from \eqref{d1} and \eqref{d2}, it follows that
$$
\lim_{k\to\infty}\langle T(v_k),\phi\rangle=\langle T(v),\phi\rangle,\quad \forall\phi\in V
$$
and hence $T$ is demicontinuous.\\
\textbf{Monotonicity of $T$:} Let $v_1,v_2\in V$, then we observe that
\begin{equation}\label{tmn}
\begin{split}
\langle T(v_1)-T(v_2),v_1 -v_2\rangle&=\int_{\Om}\langle A(x,\nabla v_1)-A(x,\nabla v_2),\nabla(v_1- v_2)\rangle\,dx\\
&\quad-\int_{\Om}f_n(x)\left\{\Big(v_1^{+}+\frac{1}{n}\Big)^{-\gamma(x)}-\Big(v_2^{+}+\frac{1}{n}\Big)^{-\gamma(x)}\right\}(v_1- v_2)\,dx
\end{split}
\end{equation}
Note that by $(H_3)$, the first integral above is nonnegative. Moreover, it is easy to see that the second integral above is nonpositive. Therefore, $T$ is monotone.

Thus, by Theorem \ref{MB}, $T$ is surjective. Hence for every $n\in\mathbb{N}$, there exists $v_n\in V$ such that  we have
\begin{equation}\label{vneqn}
\int_{\Om}A(x,\nabla v_n)\nabla\phi\,dx=\int_{\Om}f_n(x)\Big(v_n^{+} +\frac{1}{n}\Big)^{-\gamma(x)}\,\phi\,dx,\quad\forall \phi\in V.
\end{equation}
Next, we prove the following properties of $v_n$.\\
\textbf{Boundedness:} Proceeding along the lines of the proof of \cite[Page 13, Theorem $3.13$]{Garain}, it follows that $v_n\in L^\infty(\Om)$ for every $n\in\mathbb{N}$.\\
\textbf{Positivity:} Choosing $\phi=\text{min}\big\{v_n,0\big\}$ as a test function in the equation \eqref{vneqn} we get $v_n\geq 0$ in $\Omega$. Since $f\neq 0$ in $\Om$, by \cite[Theorem $3.59$]{Juh} for every $\omega\Subset\Om$, there exists a positive constant $C=C(\omega,n)$ such that 
\begin{equation}\label{vnpr}
v_n\geq C(\omega,n)>0\text{ in } \omega.
\end{equation}
\textbf{Monotonicity of $v_n$:} From the above step, recalling that $v_n\geq 0$ in $\Om$ and choosing $\phi = (v_{n}-v_{n+1})^+$ as a test function in \eqref{vneqn}, we have
\begin{equation}\label{monopf}
\begin{split}
&\int_{\Omega}\langle A(x,\nabla v_n)-A(x,\nabla v_{n+1}),\nabla(v_n-v_{n+1})^{+}\rangle\,dx\\
&=\int_{\Omega}\left\{f_{n}\Big(v_{n}+\frac{1}{n}\Big)^{-\gamma(x)}-f_{n+1}\Big(v_{n+1}+\frac{1}{n+1}\Big)^{-\gamma(x)}\right\}(v_n-v_{n+1})^{+}\,dx\\
&\leq\int_{\Omega}f_{n+1}\left\{\Big(v_{n}+\frac{1}{n}\Big)^{-\gamma(x)}-\Big(v_{n+1}+\frac{1}{n+1}\Big)^{-\gamma(x)}\right\}(v_n-v_{n+1})^{+}\,dx\leq 0,
\end{split}
\end{equation}
where the last inequality above follows using $f_{n}(x) \leq f_{n+1}(x)$ and the positivity of $\gamma$. Therefore by $(H_3)$ we obtain $\nabla v_n=\nabla v_{n+1}$ in $\{x\in\Om:v_n(x)>v_{n+1}(x)\}$. Hence, we have $v_{n+1}\geq v_n$ in $\Om$. In particular, we get $v_n\geq v_1$ in $\Om$ and thus, by \eqref{vnpr} for every $\omega\Subset\Om$, there exists a positive constant $C(\omega)$ (independent of $n$) such that $v_n\geq C(\omega)>0$ in $\omega$. Uniqueness of $v_n$ follows similarly as monotonicity. This completes the proof. 
\end{proof}

Next, we obtain some a priori estimates for $v_n$. Before stating them, we define
\begin{equation}\label{deltanbd2}
\omega_\delta=\Omega\setminus\overline{\Omega_\delta},\quad\delta>0,
\end{equation}
where $\Omega_{\delta}=\{x\in\Om:\text{dist}(x,\partial\Om)<\delta\}$ is defined in \eqref{deltanbd}.
\begin{Lemma}\label{apun}
Let $1<p<\infty$ and $w\in W_p^{s}$ for some $s\in I$. Assume that $\{v_n\}_{n\in\mathbb{N}}$ is the sequence of solutions of \eqref{approx} given by Lemma \ref{exisapprox}.
\begin{enumerate}
\item[$(a)$] Suppose there exists $\delta>0$ such that $0<\gamma(x)\leq 1$ for every $x\in\Omega_\delta$, then $\{v_n\}_{n\in\mathbb{N}}$ is uniformly bounded in $W_0^{1,p}(\Omega,w)$, provided $f\in L^m(\Omega)\setminus\{0\}$ is nonnegative, where $m$ is given in \eqref{vm1}.
\item[$(b)$] If $\gamma(x)\equiv \gamma\in(0,1)$ is a positive constant, then $\{v_n\}_{n\in\mathbb{N}}$ is uniformly bounded in $W_0^{1,p}(\Om,w)$, provided $f\in L^m(\Omega)\setminus\{0\}$ is nonnegative, where $m$ is given in \eqref{cm1}. 
\item[$(c)$] If $\gamma(x)\equiv 1$, then $\{v_n\}_{n\in\mathbb{N}}$ is uniformly bounded in $W_0^{1,p}(\Om,w)$ for any nonnegative $f\in L^1(\Om)\setminus\{0\}$.
\item[$(d)$] Suppose there exists $\delta>0$ and $\gamma^*>1$ such that $\|\gamma\|_{L^\infty(\Omega_\delta)}\leq\gamma^*$, then the sequences $\{v_n\}_{n\in\mathbb{N}}$ and $\left\{v_n^{\frac{\gamma^{*}+p-1}{p}}\right\}_{n\in\mathbb{N}}$ are uniformly bounded in $W_{\mathrm{loc}}^{1,p}(\Omega,w)$ and $W_0^{1,p}(\Omega,w)$ respectively, provided $f\in L^m(\Omega)\setminus\{0\}$ is nonnegative, where $m$ is given in \eqref{vm2}.

\item[$(e)$] If $\gamma(x)=\gamma>1$ is a constant, then the sequences $\{v_n\}_{n\in\mathbb{N}}$ and $\left\{v_n^{\frac{\gamma^{*}+p-1}{p}}\right\}_{n\in\mathbb{N}}$ are uniformly bounded in $W_{\mathrm{loc}}^{1,p}(\Omega,w)$ and $W_0^{1,p}(\Omega,w)$ respectively, for any nonnegative $f\in L^1(\Omega)\setminus\{0\}$.
\end{enumerate}
\end{Lemma}
\begin{proof}
We prove the result only for $1\leq p_s<N$, since the other cases are analogous.
\begin{enumerate}
\item[$(a)$] Choosing $\phi=v_n$ as a test function in the weak formulation of \eqref{approx} and using $(H_2)$, we get
\begin{equation}\label{api}
\begin{split}
\alpha\int_{\Omega}w(x)|\nabla v_n|^p\,dx&\leq\int_{\Omega}f_n\Big(v_n+\frac{1}{n}\Big)^{-\gamma(x)}v_n\,dx\leq\int_{\overline{\Omega_\delta}\cap\{0<v_n\leq 1\}}fv_n^{1-\gamma(x)}\,dx\\&+\int_{\overline{\Omega_\delta}\cap\{v_n>1\}}fv_n^{1-\gamma(x)}\,dx
+\int_{\omega_\delta}f\|c(\omega_\delta)^{-\gamma(x)}\|_{L^\infty(\Omega)}v_n\,dx\\
&\qquad\leq\|f\|_{L^1(\Omega)}+(1+\|c(\omega_\delta)^{-\gamma(x)}\|_{L^\infty(\Omega)})\int_{\Omega}fv_n\,dx,
\end{split}
\end{equation}
where we have used the fact that $0<\gamma(x)\leq 1$ for every $x\in\Omega_\delta$ and the property $v_n\geq c(\omega_\delta)>0$ in $\omega_\delta$ from Lemma \ref{exisapprox}. Since $f\in L^m(\Omega)$ for $m=(p_s^{*})'$, using H\"older's inequality and Lemma \ref{emb} in \eqref{api}, we get
\begin{equation*}
\begin{split}
\alpha\|v_n\|^p&\leq \|f\|_{L^1(\Omega)}+(1+\|c(\omega_\delta)^{-\gamma(x)}\|_{L^\infty(\Omega)})\|f\|_{L^m(\Omega)}\|v_n\|_{L^{p_s^{*}}(\Omega)}\\
&\qquad\leq \|f\|_{L^1(\Omega)}+C\|f\|_{L^m(\Omega)}\|v_n\|,
\end{split}
\end{equation*}
for some positive constant $C$, independent of $n$. Therefore, the sequence $\{v_n\}_{n\in\mathbb{N}}$ is uniformly bounded in $W_0^{1,p}(\Omega,w)$.

\item[$(b)$] Let $\gamma\in(0,1)$. Choosing $\phi=v_n$ as a test function in \eqref{approx}, using Lemma \ref{emb}, we have
\begin{equation}\label{api1}
\begin{split}
\alpha\| v_n\|^p=\int_{\Omega}f_n\Big(v_n+\frac{1}{n}\Big)^{-\gamma}v_n\,dx\leq\int_{\Om}fv_n^{1-\gamma}\,dx\leq\|f\|_{L^m(\Omega)}\|v_n\|^{1-\gamma},
\end{split}
\end{equation}
where $m=(\frac{p_s^{*}}{1-\gamma})^{'}$. 
\item[$(c)$] If $\gamma=1$, then again choosing $\phi=v_n$ in \eqref{approx} and proceeding along the lines of \eqref{api1}, we get $\|v_n\|^{p}_{W_0^{1,p}(\Om,w)}\leq\frac{1}{\alpha}\|f\|_{L^1(\Om)}$.
\item[$(c)$] Note that by Lemma \ref{exisapprox}, we know that $v_n\in W_0^{1,p}(\Omega,w)\cap L^\infty(\Omega)$. Therefore, choosing $v_n^{\gamma^*}$ as a test function in the weak formulation of \eqref{approx} and using $(H_2)$, we obtain
\begin{equation}\label{apigrt1}
\begin{split}
&\alpha\gamma^*\Big(\frac{p}{\gamma^{*}+p-1}\Big)^p\int_{\Omega}w(x)\Big|\nabla v_n^\frac{\gamma^{*}+p-1}{p}\Big|^{p}\,dx\leq\int_{\Omega}f_n\Big(v_n+\frac{1}{n}\Big)^{-\gamma(x)}v_n^{\gamma^*}\,dx\\
&\qquad\leq\int_{\overline{\Omega_\delta}\cap\{0<v_n\leq 1\}}fv_n^{\gamma^{*}-\gamma(x)}\,dx+\int_{\overline{\Omega_\delta}\cap\{v_n>1\}}fv_n^{\gamma^{*}-\gamma(x)}\,dx+
\int_{\omega_\delta}f\|c(\omega_\delta)^{-\gamma(x)}\|_{L^\infty(\Omega)}v_n^{\gamma^*}\,dx\\
&\qquad\leq\|f\|_{L^1(\Omega)}+(1+\|c(\omega_\delta)^{-\gamma(x)}\|_{L^\infty(\Omega)})\int_{\Omega}fv_n^{\gamma^*}\,dx\\
&\qquad\leq \|f\|_{L^1(\Omega)}+(1+\|c(\omega_\delta)^{-\gamma(x)}\|_{L^\infty(\Omega)})\int_{\Omega}f\Big(v_n^{\frac{\gamma^{*}+p-1}{p}}\Big)^\frac{p\gamma^*}{\gamma^*+p-1}\,dx\\
&\qquad\leq\|f\|_{L^1(\Omega)}+c
\|f\|_{L^m(\Omega)}\left\|v_n^{\frac{\gamma^{*}+p-1}{p}}\right\|^\frac{l}{m^{'}},
\end{split}
\end{equation}
for some positive constant $c$, where we have used Lemma \ref{emb}, the hypothesis $\|\gamma\|_{L^\infty(\Omega_\delta)}\leq\gamma^*$ along with $v_n\geq c(\omega)>0$ from Lemma \ref{exisapprox} and also the fact that $v_{n}^\frac{\gamma^*+p-1}{p}\in W_0^{1,p}(\Omega,w)$, which is true since $v_n\in W_0^{1,p}(\Omega,w)\cap L^\infty(\Omega)$. Since $p>\frac{l}{m^{'}}$, it follows from \eqref{apigrt1} that $\left\{v_{n}^\frac{\gamma^{*}+p-1}{p}\right\}_{n\in\mathbb{N}}$ is uniformly bounded in $W_0^{1,p}(\Omega,w)$. Using this fact and H\"older's inequality, for any $\omega\Subset\Omega$, we observe that 
\begin{equation}\label{lpbd}
\int_{\omega}w v_{n}^p\,dx\leq\left(\int_{\omega}w v_{n}^{\gamma^{*}+p-1}\,dx\right)^\frac{p}{\gamma^{*}+p-1}\|w\|_{L^1(\omega)}^\frac{\gamma^{*}-1}{\gamma^{*}+p-1}\leq C, 
\end{equation}
for some positive constant $C$ independent of $n$ and
\begin{equation}\label{gradbd}
\begin{split}
\int_{\omega}w\Big|\nabla v_{n}^\frac{\gamma^{*}+p-1}{p}\Big|^p\,dx&=\Big(\frac{\gamma^{*}+p-1}{p}\Big)^p\int_{\omega}wv_{n}^{\gamma^{*}-1}|\nabla v_n|^p\,dx\\
&\qquad\geq c(\omega)^{\gamma^*-1}\Big(\frac{\gamma^{*}+p-1}{p}\Big)^p\int_{\omega}w|\nabla v_n|^p\,dx.
\end{split}
\end{equation}
In the last line above, we have again used the fact that $v_n\geq c(\omega)>0$ for every $\omega\Subset\Omega$ for some positive constant $c(\omega)$, independent of $n$. Therefore, the estimates \eqref{lpbd} and \eqref{gradbd} yields the uniform boundedness of $\{v_n\}_{n\in\mathbb{N}}$ in $W^{1,p}_{\mathrm{loc}}(\Omega,w)$. This completes the proof.

\item[$(e)$] Let $\gamma(x)\equiv \gamma>1$. Again, note that by Lemma \ref{approx}, we know that $v_n\in W_0^{1,p}(\Omega,w)\cap L^\infty(\Omega)$ and as in $(d)$ above, choosing $v_n^{\gamma}$ as a test function in \eqref{approx} and using $(H_3)$, we obtain
\begin{equation}\label{apigrt}
\begin{split}
&\alpha\gamma\Big(\frac{p}{\gamma+p-1}\Big)^p\int_{\Omega}w(x)\Big|\nabla v_n^\frac{\gamma+p-1}{p}\Big|^{p}\,dx=\int_{\Omega}f_n\Big(v_n+\frac{1}{n}\Big)^{-\gamma}v_n^{\gamma}\,dx\leq\|f\|_{L^1(\Om)}.
\end{split}
\end{equation}
Thus, proceeding similarly as in $(c)$ above, the result follows.
\end{enumerate}
\end{proof}

\begin{Lemma}\label{est2}
Let $1<p<\infty$, $w\in W_p$ and $f=w$ in $\Om$. Suppose $\{v_n\}_{n\in\mathbb{N}}$ is the sequence of solutions of \eqref{approx} given by Lemma \ref{exisapprox}. If there exists $\delta>0$ such that $0<\gamma(x)\leq 1$ for every $x\in\Omega_\delta$, then $\{v_n\}_{n\in\mathbb{N}}$ is uniformly bounded in $W_0^{1,p}(\Omega,w)$.
\end{Lemma}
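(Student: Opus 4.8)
The plan is to run the argument of Lemma~\ref{apun}$(a)$ almost verbatim, the only structural change being that, since $w$ is now merely assumed to belong to $W_p$ rather than to $W_p^{s}$, the embedding Lemma~\ref{emb} is no longer available; its role is taken over by the weighted Poincar\'e inequality attached to a $p$-admissible weight together with the special choice $f=w$, which lets us bound the singular right-hand side directly in terms of the measure $d\mu=w\,dx$.

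Concretely, first I would test the weak formulation of \eqref{approx} with $\phi=v_n\in W_0^{1,p}(\Om,w)$ (legitimate by Lemma~\ref{exisapprox}) and use $(H_2)$ to obtain
\[
\alpha\|v_n\|^{p}\le\int_{\Om}f_n\Big(v_n+\tfrac1n\Big)^{-\gamma(x)}v_n\,dx .
\]
On $\Omega_\delta$ we have $0<\gamma(x)\le1$, so (using $t^{a}\le 1+t$ for $t>0$, $0\le a\le1$, or equivalently splitting $\Omega_\delta$ into $\{0<v_n\le1\}$ and $\{v_n>1\}$ as in \eqref{api}) one gets $\big(v_n+\tfrac1n\big)^{-\gamma(x)}v_n\le v_n^{\,1-\gamma(x)}\le 1+v_n$; on $\omega_\delta=\Om\setminus\overline{\Omega_\delta}\Subset\Om$, Lemma~\ref{exisapprox} furnishes a constant $c(\omega_\delta)>0$ \emph{independent of $n$} with $v_n\ge c(\omega_\delta)$, hence $\big(v_n+\tfrac1n\big)^{-\gamma(x)}\le\|c(\omega_\delta)^{-\gamma(x)}\|_{L^\infty(\Om)}<\infty$ since $\gamma\in C(\overline{\Om})$. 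Using $f=w$ and writing $\mu(\Om)=\int_\Om w\,dx<\infty$ (finite as $\Om$ is bounded and $w\in L^1_{\mathrm{loc}}(\R^N)$), these two estimates give
\[
\int_{\Om}f_n\Big(v_n+\tfrac1n\Big)^{-\gamma(x)}v_n\,dx\le\mu(\Om)+\big(1+\|c(\omega_\delta)^{-\gamma(x)}\|_{L^\infty(\Om)}\big)\int_{\Om}v_n\,d\mu .
\]

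It remains to absorb $\int_\Om v_n\,d\mu$. By H\"older's inequality in $L^p(\Om,w)$ and the equivalence of the norm \eqref{norm1} with \eqref{equinorm} on $W_0^{1,p}(\Om,w)$ (i.e.\ the weighted Poincar\'e inequality; alternatively, extend $v_n$ by $0$ to a ball $B\supset\Om$ and apply \eqref{wp} followed by H\"older to descend from $L^{\kappa p}(\Om,w)$ to $L^{p}(\Om,w)$), one has $\int_\Om v_n\,d\mu\le\mu(\Om)^{1/p'}\|v_n\|_{L^p(\Om,w)}\le C\|v_n\|$ with $C$ independent of $n$. Combining with the previous display yields $\alpha\|v_n\|^{p}\le\mu(\Om)+C\|v_n\|$, and since $p>1$ this forces $\sup_{n}\|v_n\|<\infty$, which is the claim.

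I do not anticipate a genuine difficulty beyond one bookkeeping point: the constant $c(\omega_\delta)$ produced by Lemma~\ref{exisapprox} must not depend on $n$ (it does not, because $v_{n}\ge v_{1}$ in $\Om$), so that the constant multiplying $\int_\Om v_n\,d\mu$ stays uniform; one should also record that $\omega_\delta\Subset\Om$, so that the interior positivity statement of Lemma~\ref{exisapprox} genuinely applies on it.
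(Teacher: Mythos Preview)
Your argument is correct and is exactly the paper's proof: test \eqref{approx} with $v_n$, use $(H_2)$ and the same $\Omega_\delta$/$\omega_\delta$ splitting as in \eqref{api} to reach $\alpha\|v_n\|^p\le \|w\|_{L^1(\Om)}+C\int_\Om w\,v_n\,dx$, and then exploit $f=w$ together with H\"older in $L^p(\Om,w)$ and the weighted Poincar\'e inequality to get $\alpha\|v_n\|^p\le \|w\|_{L^1(\Om)}+c\|w\|_{L^1(\Om)}^{1/p'}\|v_n\|$. Your remarks that $c(\omega_\delta)$ is independent of $n$ (since $v_n\ge v_1$) and that $\omega_\delta\Subset\Om$ are precisely the points the paper relies on via Lemma~\ref{exisapprox}.
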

\begin{proof}
Choosing $\phi=v_n$ as a test function in the weak formulation of \eqref{approx} and proceeding along the lines of the proof of $(a)$ in Lemma \ref{apun}, we have
\begin{equation}\label{a1}
\alpha\|v_n\|^p\leq\|f\|_{L^1(\Om)}+(1+\|c(\omega_{\delta}))^{-\gamma(x)}\|_{L^\infty(\Om)})\int_{\Om}fv_n\,dx\leq\|w\|_{L^1(\Om)}+c\|w\|_{L^1(\Om)}^\frac{1}{p^{'}}\|v_n\|,
\end{equation}
for some positive constant $c$. Since $1<p<\infty$, the result follows.
\end{proof}
\section{Proof of the main results}
\textbf{Proof of Theorem \ref{pur2thm1}:}
\begin{enumerate}
\item[$(a)$] Let $1\leq p_s<N$ and $f\in L^m(\Om)\setminus\{0\}$ be nonnegative, where $m=(p_s^{*})^{'}$. Then, by Lemma \ref{exisapprox}, for every $n\in\mathbb{N}$, there exists $v_n\in W_0^{1,p}(\Omega,w)$ such that
\begin{equation}\label{t1eqn1}
\int_{\Omega}A(x,\nabla v_n)\nabla\phi\,dx=\int_{\Omega}f_n\Big(v_n+\frac{1}{n}\Big)^{-\gamma(x)}\phi\,dx,\quad\forall\phi\in C_c^{1}(\Omega).
\end{equation}
By Lemma \ref{apun}-$(a)$, we have $\|v_n\|\leq C$, for some positive constant $C$, independent of $n$. Using this fact and taking into account the monotonicity property $v_{n+1}\geq v_n$ for every $n\in\mathbb{N}$ from Lemma \ref{exisapprox}, there exists $v\in W_0^{1,p}(\Omega,w)$ such that $0<v_n\leq v$ a.e. in $\Om$ and up to a susequence $v_n\to v$ pointwise a.e. in $\Omega$ and weakly in $W_0^{1,p}(\Omega,w)$. Moreover, by Lemma \ref{exisapprox}, for every $\omega\Subset\Omega$, there exists a positive constant $c(\omega)$, independent of $n$ such that $v\geq v_n\geq c(\omega)>0$ in $\omega$, which gives
\begin{equation}\label{rhs}
\left|f_n\Big(v_n+\frac{1}{n}\Big)^{-\gamma(x)}\phi\right|\leq \|c_{\mathrm{supp}\,\phi}^{-\gamma(x)}\,\phi\|_{L^\infty(\Omega)}f\in L^1(\Omega),
\end{equation}
for every $\phi\in C_c^{1}(\Omega)$.
Note that since $0<v_n\leq v$ a.e. in $\Om$, by the Lebesgue's dominated convergence theorem, $v_n\to v$ strongly in $L^p(\Om,w)$. Thus, by \cite[Theorem 2.16]{Mikko}, up to a subsequence $\nabla v_n\to \nabla v$ pointwise a.e. in $\Omega$ and hence, we obtain
\begin{equation}\label{lhs}
\lim_{n\to\infty}\int_{\Omega}A(x,\nabla v_n)\nabla\phi\,dx=\int_{\Omega}A(x,\nabla v)\nabla\phi\,dx,
\end{equation}
for every $\phi\in C_c^{1}(\Omega)$. On the other hand, by \eqref{rhs} and the Lebsegue's dominated convergence theorem, we obtain
\begin{equation}\label{rhslim}
\lim_{n\to\infty}\int_{\Omega}f_n\Big(v_n+\frac{1}{n}\Big)^{-\gamma(x)}\phi\,dx=\int_{\Omega}fv^{-\gamma(x)}\phi\,dx,
\end{equation}
for every $\phi\in C_c^{1}(\Omega)$. Thus using \eqref{lhs} and \eqref{rhslim} in \eqref{t1eqn1}, the result follows. The proof for $p_s\geq N$ is analogous.

\item[$(b)$] By Lemma \ref{apun}-$(b)$, we have $\{v_n\}_{n\in\mathbb{N}}$ is uniformly bounded in $W_0^{1,p}(\Om,w)$. Now, proceeding along the lines of the proof of part $(a)$ above, the result follows.

\item[$(c)$]  By Lemma \ref{apun}-$(c)$, we have $\{v_n\}_{n\in\mathbb{N}}$ is uniformly bounded in $W_0^{1,p}(\Om,w)$. Now, proceeding along the lines of the proof of part $(a)$ above, the result follows.

\item[$(d)$] By Lemma \ref{apun}-$(d)$, the sequences $\{v_n\}_{n\in\mathbb{N}}$ and $\left\{v_n^{\frac{\gamma^{*}+p-1}{p}}\right\}_{n\in\mathbb{N}}$ are uniformly bounded in $W^{1,p}_{\mathrm{loc}}(\Omega,w)$ and $W^{1,p}_{0}(\Omega,w)$ respectively. Then, analogous to the proof of $(a)$ above, the result follows.

\item[$(e)$] By Lemma \ref{apun}-$(e)$, the sequences $\{v_n\}_{n\in\mathbb{N}}$ and $\left\{v_n^{\frac{\gamma+p-1}{p}}\right\}_{n\in\mathbb{N}}$ are uniformly bounded in $W^{1,p}_{\mathrm{loc}}(\Omega,w)$ and $W^{1,p}_{0}(\Omega,w)$ respectively. Then, analogous to the proof of $(a)$ above, the result follows.  
\qed
\end{enumerate}

\textbf{Proof of Theorem \ref{w2thm1}:} If $f=w$ in $\Om$, by Lemma \ref{est2}, the sequence $\{v_n\}_{n\in\mathbb{N}}$ is uniformly bounded in $W_0^{1,p}(\Om,w)$. Then, following the exact proof of $(a)$ above in Theorem \ref{pur2thm1}, the result follows. \qed



\end{document}